\definecolor{MyGray}{rgb}{0.96,0.97,0.98}
\declaretheoremstyle[
headfont=\sffamily\bfseries\color{MidnightBlue},
bodyfont=\normalfont,
notebraces={(}{)},
headpunct={.   },
mdframed={backgroundcolor=MyGray, 
	hidealllines=true, 
	leftmargin=-1.5mm,
	innerleftmargin=1.6mm,
	innerrightmargin=1.6mm,
	innertopmargin=4mm,
	innerbottommargin=.75em,
	skipabove=5pt,
	skipbelow=2pt,
	nobreak=true },
]{mythm}
\declaretheoremstyle[
headfont=\sffamily\bfseries\color{MidnightBlue},
bodyfont=\normalfont,
notebraces={(}{)},
headpunct={.   },
]{mythmclean}
\declaretheorem[name=Theorem, style=mythm, numberwithin=section]{thm}
\declaretheorem[name=Proposition,  style=mythm, numbered=no]{proposition*}
\declaretheorem[name=Lemma, style=mythm, sibling=thm]{lemma}
\declaretheorem[name=Corollary,  style=mythm, sibling=thm]{cor}
\declaretheoremstyle[
headfont=\bfseries\color{RawSienna},
mdframed={backgroundcolor=Salmon!5, 
	hidealllines=true, 
	leftmargin=-1.5mm,
	innerleftmargin=1.6mm,
	innerrightmargin=1.6mm,
	innertopmargin=1em,
	innerbottommargin=.75em,
	skipabove=5pt,
	skipbelow=0pt,
	nobreak=true },
headpunct={.   },
postheadspace={0pt}]{redexmp}
\declaretheoremstyle[
headfont=\bfseries\color{ForestGreen},
mdframed={backgroundcolor=LimeGreen!5, 
	hidealllines=true, 
	leftmargin=-1.5mm,
innerleftmargin=1.6mm,
innerrightmargin=1.6mm,
	innertopmargin=3mm,
innerbottommargin=.75em,
nobreak=true },
headpunct={.   },
postheadspace={0pt},
]{greendefi}
\declaretheorem[style=greendefi, shaded={bgcolor=LimeGreen!5, padding=2mm, textwidth=0.99\textwidth}, name=Concept, numbered=no]{idea*}
\declaretheoremstyle[
headfont=\bfseries\color{RawSienna},
mdframed={backgroundcolor=Salmon!5, 
	hidealllines=true, 
	leftmargin=-1.5mm,
	innerleftmargin=1.6mm,
	innerrightmargin=1.6mm,
	innertopmargin=1em,
	innerbottommargin=.75em,
	skipabove=5pt,
	skipbelow=0pt,
	nobreak=true },
headpunct={.   },
postheadspace={0pt}]{redexmp}
\declaretheorem[name=Problem, style=redexmp, numbered=no]{prob}
\title{The Maximum Number of Sets for $12$ Cards is $14$}
\author{Justin Stevens, Duncan Wilson}
\begin{document}
\maketitle
\begin{abstract} 
We present a novel proof that the maximum number of sets with $4$ properties for $12$ cards is $14$ using the geometry of the finite field $\mathbb{F}_3^4$, number theory, combinatorics, and graph theory. We also present several computer algorithms for finding the maximum number of sets. In particular, we show a complete set solver that iterates over all possible board configurations. We use this method to compute the maximum number of sets with $4$ properties for a small number of cards, but it is generally too inefficient. However, with this method, we compute the maximum number of sets for $3$ properties for all possible numbers of cards. We also present an algorithm for constructing near-optimal maximum sets. As with all good questions, this began at a bar in Las Vegas. 
\end{abstract}
\section{Introduction}
For those unfamiliar with the game of SET, it is a card game with a deck of 81 cards. Each card has four qualities: color, quantity, shape, and shading. A set comprises three cards, each quality of which is \textit{all the same or all different}. The shapes are typically ovals, squiggles, and diamonds, while the colors are red, purple, or green, the quantity is $1$, $2$, or $3$, and the shading is solid, striped, or open.
\begin{center}
\includegraphics[scale=0.33]{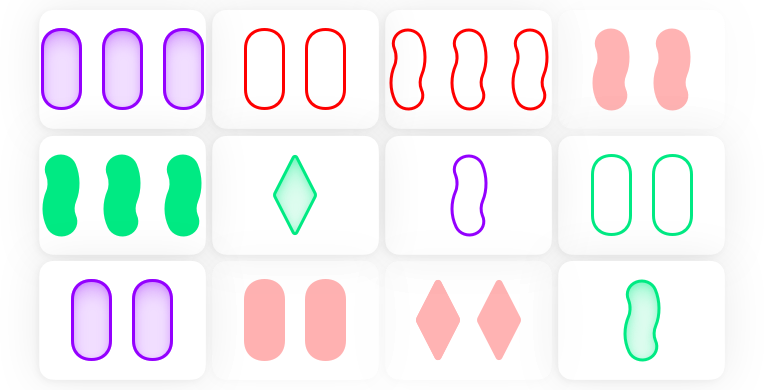}
\end{center}
The set emphasized in the image above is a set because the shapes are all different, the color (red) is all the same, the shading is all the same, and the quantity is the same. Can you find the other sets that are not highlighted? The game is played with 12 cards to create a \textit{board}.
\subsection{Problem Statement}
\begin{prob} For a board of 12 cards, what is the maximum possible number of sets?\end{prob}  
In exploring this problem, we will prove the maximum number of sets for $n$ cards where $3\le n\le 12$. 
The following was only possible from prior work. The math and the graphs in the excellent book The Joy of SET \cite{Joy} and some of the formulations in \cite{Fairbanks} made this proof possible.

\subsection{Introduction to Geometric Proof}

We can view a card from the SET deck as a point in a four-dimensional space over the finite field $\mathbb{F}^4_3$. It follows that a set is a line in this field. Specifically, if we have two points $p$ and $q$, for third point $r$ to be a set, it must satisfy: $p+q+r = 0$ (in $\mathbb{F}^4_3$). If so, this satisfies the condition for each property \textit{that they are the same or all different} which makes it a set.

\begin{thm} The maximum number of sets for $3$ and $4$ cards is $1$. \end{thm}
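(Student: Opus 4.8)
The plan is to reduce everything to one structural fact about $\mathbb{F}_3^4$: \emph{any two distinct points lie on a unique set}. Indeed, if $p \neq q$, then the set condition $p + q + r = 0$ has the unique solution $r = -(p+q)$, and one checks $r \neq p$ and $r \neq q$: for instance $r = p$ would force $q = -2p = p$ (using $-2 \equiv 1$ in $\mathbb{F}_3$), contradicting $p \neq q$. So a pair of distinct cards extends to exactly one set, and this uniqueness is the engine of the whole argument.

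First, the case of $3$ cards is immediate: three cards admit only $\binom{3}{3} = 1$ candidate triple, so there can be at most one set, and choosing any collinear triple (e.g. $\{0, e_1, -e_1\}$) realizes it. Hence the maximum is $1$.

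For $4$ cards, let the distinct points be $a, b, c, d$. There are $\binom{4}{3} = 4$ candidate triples, each obtained by deleting one of the four points, so any two distinct candidate triples share exactly $4 - 2 = 2$ points. Suppose two of them were both sets. They share a pair $\{x, y\}$, and by the uniqueness fact the third point of each is forced to equal $-(x+y)$; hence the two deleted points coincide, contradicting distinctness of $a,b,c,d$. Therefore at most one triple can be a set. To see $1$ is attained, take any set $\{a, b, c\}$ and any fourth point $d \notin \{a, b, c\}$: then $\{a,b,c\}$ is a set, while $\{a,b,d\}$ being a set would force $d = -(a+b) = c$ (and similarly for the other two triples), so exactly one set occurs. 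Thus the maximum for $4$ cards is $1$.

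The argument is short, so I do not expect a serious obstacle; the only point requiring genuine care is the uniqueness claim—specifically that the completing point $-(p+q)$ is honestly distinct from $p$ and $q$—which is exactly where the characteristic-$3$ structure of the field is used. Everything else is bookkeeping on the overlap pattern of triples drawn from a $4$-element set, and both the upper bound and the matching construction follow directly.
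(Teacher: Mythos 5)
Your proof is correct and rests on exactly the same key fact the paper uses: two distinct points determine a unique completing point $r=-(p+q)$, so $\{p,q,r\}$ and $\{p,q,s\}$ cannot both be sets unless $r=s$. You simply spell out the details (the characteristic-$3$ check that $r\neq p,q$, the overlap bookkeeping among the four triples, and the explicit attaining configurations) that the paper leaves implicit.
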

\begin{proof} 
We begin by noting that $3$ cards are the minimum number to form a set, and with $3$ cards, there clearly can only be one set. Next up, with $4$ cards, it is impossible for this fourth card to add an additional set since if $\{p, q, r\}$ is a set, then $\{p, q, s\}$ cannot also form a set unless $r=s$.  \end{proof}

\begin{thm} The maximum number of sets for $5$ cards is $2$. \end{thm}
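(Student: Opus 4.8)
The plan is to prove the two inequalities separately: that some arrangement of $5$ cards attains $2$ sets, and that no arrangement attains $3$ or more. Throughout I will use the dictionary from the introduction: a card is a point of $\mathbb{F}_3^4$, a set is a line $\{p,q,r\}$ with $p+q+r=0$, and — the fact I will lean on hardest — any two distinct points lie on a unique line, so a given pair of cards completes \emph{at most one} set, and two distinct sets meet in at most one card.

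For the lower bound it suffices to exhibit five cards containing two sets. I would take two lines sharing a single point, for instance
\[
(0,0,0,0),\quad (1,0,0,0),\quad (2,0,0,0),\quad (0,1,0,0),\quad (0,2,0,0).
\]
The first three sum to $0$ and the points $(0,0,0,0),(0,1,0,0),(0,2,0,0)$ sum to $0$, so this board has at least two sets. (By the upper bound below there can be no third, so it has exactly two.)

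For the upper bound, suppose the five cards contain $k$ distinct sets $L_1,\dots,L_k$, each a $3$-point line. A quick count gives the preliminary bound $k\le 3$: the $3k$ card-pairs used by these sets are all distinct, since each pair completes at most one set, and there are only $\binom{5}{2}=10$ pairs available, forcing $3k\le 10$. To sharpen this to $k\le 2$ I would compare the number of \emph{distinct} cards the sets occupy against the budget of $5$. Since distinct lines meet in at most one point, each $|L_i\cap L_j|\le 1$, so by the Bonferroni (inclusion–exclusion truncation) inequality,
\[
5 \;\ge\; |L_1\cup\cdots\cup L_k| \;\ge\; \sum_{i}|L_i| - \sum_{i<j}|L_i\cap L_j| \;\ge\; 3k - \binom{k}{2}.
\]
For $k=3$ the right-hand side equals $9-3=6>5$, a contradiction; hence $k\le 2$, and with the construction the maximum is exactly $2$.

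The step I expect to be the crux is precisely the refinement from $3$ down to $2$: the intuitive pair-counting argument stalls at $k\le 3$, and the real content is excluding the borderline case $k=3$. The union bound above does this directly, but it hinges on using the geometric fact that two distinct sets share at most one card — equivalently, three sets on only five cards would have to crowd into a common point, and then the third set still demands a sixth card that does not exist. An alternative, purely combinatorial route to the same contradiction is to note that three sets contribute $9$ incidences over $5$ cards, forcing the degree sequence $(2,2,2,2,1)$, which in turn produces $\sum_i\binom{a_i}{2}=4$ coincident pairs of sets while only $\binom{3}{2}=3$ pairs of sets exist; I would keep the inclusion–exclusion version as the cleaner presentation.
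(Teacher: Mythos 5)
Your proof is correct, but it takes a different route from the paper's. The paper argues the upper bound by fixing one set $\{p,q,r\}$ and classifying what any further set could look like: a second set cannot contain two cards of $\{p,q,r\}$ (their third card is already $r$, $q$, or $p$), so with only two cards $s,t$ remaining it must contain both of them plus one card of $\{p,q,r\}$ --- and since the pair $\{s,t\}$ completes to at most one set, at most one additional set exists, giving the bound $2$ directly. You instead run a global counting argument: after the pair-counting bound $3k\le\binom{5}{2}$ gives $k\le 3$, you kill $k=3$ with the truncated inclusion--exclusion bound $5\ge 3k-\binom{k}{2}$, using that distinct lines meet in at most one point. Both arguments rest on the same geometric facts (unique line through two points, equivalently at most one common point of two lines), and your construction of two lines through a common point is essentially identical to the paper's example. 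What the paper's case analysis buys is brevity and a structure that the authors reuse verbatim for $n=6$ and $n=7$ (``new sets must use one old card and two new cards''); what your union bound buys is a self-contained, assumption-free count (you never need to single out a first set), though note it is tight exactly at $5$ points --- for $6$ points and $k=4$ it gives $3k-\binom{k}{2}=6\le 6$ with no contradiction, so it does not extend to the paper's later theorems the way their lemma does. Your side remark on the degree-sequence variant is also sound: a card lies in at most $2$ sets when only $4$ other cards exist, and $\sum_i\binom{a_i}{2}\ge 4>\binom{3}{2}$ gives the same contradiction.
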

\begin{proof} 
For $5$ cards, we can take a $3$-card set, say $\{p, q, r\}$. Then, for two additional cards, $s, t$, outside of the set, the only possible way this can form a set is if we have one element from the first set plus $s$ and $t$ form a set, for example, $\{p, s, t\}$. An example of this is the points in $\mathbb{F}_3^4$, $$\{(0,0,0,0), (0, 0, 0, 1), (0, 0, 0, 2), (0, 0, 1, 0), (0, 0, 2, 0)\},$$ which form a maximum of $2$ sets for $5$ cards. A visualization of this is shown below, where we remove the first two coordinates for simplicity (thus plotting it in $\mathbb{F}_3^2$).

\begin{figure}[h]
\label{fig:1}
\centering 
\includegraphics[scale=0.42]{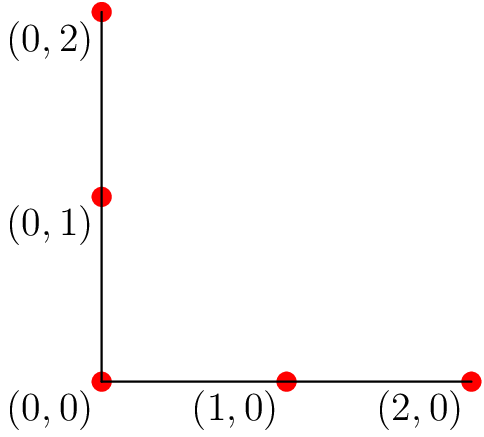}
  \caption{A configuration of $5$ cards (points) that lead to two sets. Notice that each point is represented as a dot in $\mathbb{F}_3^2$ since the first two elements of each card in the configuration above are $0$. Observe that each of the lines is equivalent to a set in the graph above.}
\end{figure}
\end{proof}

\begin{thm} The maximum number of sets for $6$ cards is $3$. \end{thm}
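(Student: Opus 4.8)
The plan is to translate the statement into counting lines among $6$ points of $\mathbb{F}_3^4$, and to attack it by bounding the number of sets through each individual card. First I would prove the upper bound. Fix a configuration of $6$ cards and let $d_p$ be the number of sets containing a given card $p$. Two distinct sets through $p$ cannot share a second card, since two points determine a unique line; hence the two partners of $p$ in one set are disjoint from its partners in any other set. These partners all lie among the remaining $5$ cards, so $2d_p \le 5$ and therefore $d_p \le 2$. Each set is counted three times in $\sum_p d_p$, so $3S = \sum_p d_p \le 6 \cdot 2 = 12$, giving $S \le 4$.

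The crux is ruling out $S = 4$. If equality held, then $\sum_p d_p = 12$ together with $d_p \le 2$ would force $d_p = 2$ for every one of the six cards. I would then argue that this regularity pins down the combinatorial type completely. Each card lies in exactly one unordered pair of the four sets; since the incidence is linear (two lines meet in at most one point), the assignment of cards to pairs of lines is injective. But there are exactly $\binom{4}{2} = 6$ pairs and exactly $6$ cards, so the assignment is a bijection. Consequently every two of the four sets meet in exactly one card, all six cards arise as these pairwise intersections $P_{ij} = \ell_i \cap \ell_j$, and each line carries exactly three of them. In other words, the configuration must be a complete quadrilateral.

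The decisive and hardest step is to show that such a quadrilateral cannot exist over $\mathbb{F}_3$. Writing one line relation per set, $P_{12}+P_{13}+P_{14}=0$, $P_{12}+P_{23}+P_{24}=0$, $P_{13}+P_{23}+P_{34}=0$, $P_{14}+P_{24}+P_{34}=0$, I would solve this linear system in $\mathbb{F}_3^4$. Summing all four relations makes every $P_{ij}$ appear twice, yielding $2\sum_{i<j} P_{ij} = 0$; because $2 = -1$ is a unit in $\mathbb{F}_3$, this forces $\sum_{i<j} P_{ij} = 0$, and back-substitution then collapses the three ``opposite'' pairs, $P_{34}=P_{12}$, $P_{24}=P_{13}$, $P_{23}=P_{14}$. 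This contradicts the distinctness of the six intersection points, so $S=4$ is impossible and $S \le 3$. I expect essentially all of the genuine difficulty to sit here: the bound $S \le 4$ is cheap and purely combinatorial, whereas excluding the quadrilateral is where characteristic $3$ must be exploited.

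To complete the proof I would exhibit a configuration attaining $3$, for instance the six cards $(0,0,0,0)$, $(0,0,0,1)$, $(0,0,0,2)$, $(0,0,1,0)$, $(0,0,2,0)$, $(0,0,2,2)$, and verify directly that exactly three triples sum to zero (the two ``axes'' through the origin together with $\{(0,0,0,1),(0,0,1,0),(0,0,2,2)\}$), with no further accidental sets. Combined with $S \le 3$, this shows the maximum number of sets for $6$ cards is exactly $3$.
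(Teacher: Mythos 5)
Your proof is correct, but it follows a genuinely different route from the paper's. The paper anchors on an existing set $\{p,q,r\}$ and performs a case analysis on the remaining cards $s,t,u$: if they form a set, the total is $2$; if not, a lemma shows that at most two sets of the form one-card-from-$\{p,q,r\}$ plus two-from-$\{s,t,u\}$ can exist, since a third would force $s+t+u=0$ upon summing the collinearity equations, giving a total of $1+2=3$. You instead prove a global bound: each card lies in at most $\lfloor 5/2\rfloor = 2$ sets, so double counting gives $S\le 4$; then $S=4$ forces every card to have degree exactly $2$, which you correctly identify as a complete quadrilateral (four lines meeting pairwise in six distinct points), and you eliminate it by summing the four line relations in $\mathbb{F}_3$, which collapses the opposite vertices $P_{12}=P_{34}$, $P_{13}=P_{24}$, $P_{14}=P_{23}$. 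It is worth noting that the forbidden configuration is the same object in both proofs --- the paper's four sets $\{p,q,r\}$, $\{p,s,t\}$, $\{q,t,u\}$, $\{r,s,u\}$ are exactly your four lines $\ell_1,\dots,\ell_4$ --- and both arguments hinge on the same characteristic-$3$ manipulation of summing line equations. What differs is the architecture: your degree-bound-plus-extremal-analysis is more systematic and self-contained, since it never needs to enumerate informally which kinds of new sets can arise relative to a chosen base set, whereas the paper's anchored case analysis yields a reusable lemma (no ``triangles'' among the three cards outside a set) that it then applies again in its $n=7$ proof. Your explicit lower-bound configuration differs from the paper's example but is equally valid, and your count of exactly three sets in it is correct.
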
 
\begin{proof} 
For $6$ cards, we begin with taking a $3$-card set, again say $\{p, q, r\}$. Then we have two cases to consider for the remaining three cards, calling them $s, t,$ and $u$. If $\{s, t, u\}$ forms a set, then there can be no new sets between the cards since it would have to involve two cards of one set plus one from the other. The next case to consider is if we have sets containing one element from the original set plus two elements outside this set. For example, we could have two new sets in $\{p, s, t\}$ and $\{q, t, u\}$. Notice that it is impossible to form any more sets in this way since if we also had that $\{r, s, u\}$ was also a set, then we would have $p+s+t=0, q+t+u, r+s+u=0$. Summing up these equations gives us $$p+q+r+2(s+t+u)=0\implies s+t+u=0,$$ which would imply $\{s, t, u\}$ are a set. However, then $\{p, s, t\}$ could not be a set, for example, contradiction. This results in the following lemma.

\begin{lemma}
If $\{p, q, r\}$ is a set and $s, t, u$ are elements outside of this set, then it is impossible to form three sets using two elements from $s, t, u$ plus one element from the set $\{p, q, r\}$. \end{lemma} 
\begin{proof} The argument above generalizes. Notice that this implies we cannot have edges leading to a set between $st$, $tu$, and $su$. Thus, there cannot be any triangles between these points.
\end{proof} 

 \noindent An example of $6$ cards (points in $\mathbb{F}_3^4$) forming $3$ sets is $$\{(0, 0, 0, 0), (0, 0, 0, 1), (0, 0, 0, 2), (0, 0, 1, 0), (0, 0, 1, 1), (0, 0, 2, 0)\}.$$  This is shown in the image below, which we again show in $\mathbb{F}_3^2$:
 \begin{figure}[h]
\label{fig:1}
\centering 
\includegraphics[scale=0.42]{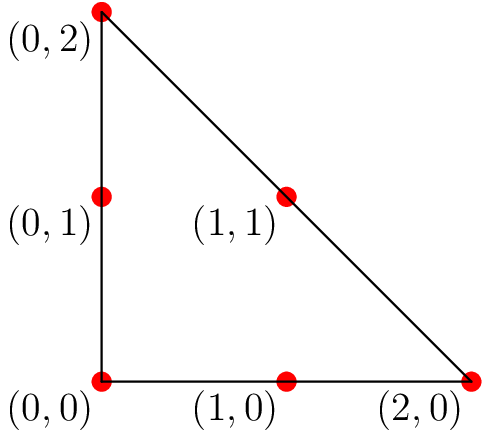}
  \caption{A configuration of $6$ cards that lead to three sets. Notice among the three points $(1,1), (1,0)$ and $(2,0)$ there are no triangles. Similarly, among $(0,1), (1,1)$, and $(0,2)$ there are no triangles.}
\end{figure}
\end{proof}
\begin{thm} The maximum number of sets for $7$ cards is $5$. \end{thm}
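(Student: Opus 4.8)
The plan is to establish the bound in two halves: first exhibit a configuration of $7$ cards realizing exactly $5$ sets, and then prove that $6$ sets is impossible.

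For the lower bound, I would confine all $7$ cards to a single affine plane, for instance by fixing the first two coordinates to $0$ and letting the last two range over $\mathbb{F}_3$. The full plane $\mathbb{F}_3^2$ has $9$ points and $12$ lines, and deleting two of its points destroys exactly the lines passing through either deleted point. Since each point lies on $4$ lines and the two deleted points share a single common line, inclusion–exclusion removes $4 + 4 - 1 = 7$ lines, leaving $12 - 7 = 5$ complete lines. Thus any $7$ of the $9$ points of a plane realize $5$ sets, which gives the construction.

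For the upper bound, I would argue through the incidence structure. Write $s$ for the number of sets and, for each card $p$, let $d(p)$ denote the number of sets containing $p$, so that $\sum_p d(p) = 3s$. Two inequalities drive the argument. First, $d(p) \le 3$: each set through $p$ is determined by one further card $q$ via the forced third point $-(p+q)$, so distinct sets through $p$ consume disjoint pairs among the $6$ remaining cards, and at most $3$ disjoint pairs exist. Second, $d(p) \ge s - 3$: deleting $p$ leaves $6$ cards carrying $s - d(p)$ sets, and by the theorem for $6$ cards this quantity is at most $3$.

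Now suppose $s \ge 6$. The two inequalities together force $d(p) = 3$ for every one of the $7$ cards, whence $3s = \sum_p d(p) = 21$ and $s = 7$. So the entire upper bound reduces to excluding $s = 7$, which I expect to be the main obstacle. In that case every one of the $\binom{7}{2} = 21$ pairs lies in a set, so the cards form a Fano-type system, and I would rule it out by a direct computation in $\mathbb{F}_3^4$: translate one card to the origin, so the other six split into antipodal pairs $\{a, -a\}, \{b, -b\}, \{c, -c\}$ with, after relabeling, $a + b + c = 0$; then the set completing the pair $\{a, -b\}$ requires $b - a$ to coincide with one of the seven cards, and checking each of the finitely many options yields a contradiction. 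This eliminates $s = 7$, so $s \le 5$, matching the construction and establishing the claim.
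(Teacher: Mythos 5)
Your proof is correct, but it takes a genuinely different route from the paper's. The paper fixes one set $\{p,q,r\}$ and splits into cases according to whether the remaining four cards contain a set among themselves: if so, a direct count caps the total at $5$; if not, the triangle-free lemma from the $n=6$ proof plus a Mantel-type bound (at most $4$ edges on $4$ vertices without a triangle) caps the new sets at $4$. You instead double-count incidences: $d(p)\le 3$ because sets through $p$ consume disjoint pairs of the other $6$ cards, and $d(p)\ge s-3$ by applying the $n=6$ theorem to the board with $p$ deleted. This is more systematic --- it uses the $n=6$ result as a black box rather than reaching into its proof for the lemma --- and your construction (any $7$ points of an affine plane, counted by inclusion--exclusion) is tidier than the paper's explicit coordinate list, which is in fact a special case of it. One remark: your final algebraic exclusion of $s=7$ is unnecessary. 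Once you have $s=7$, your second inequality gives $d(p)\ge s-3=4$ for every card, directly contradicting $d(p)\le 3$; equivalently, deleting any card from a $7$-set board would leave $6$ cards carrying at least $4$ sets, which the very theorem you invoke forbids. The computation you sketch (antipodal pairs, relabeling to $a+b+c=0$, then checking that $b-a$ matches none of the seven cards) is correct, and it is a pleasant self-contained proof that a Fano-type system cannot be realized by lines in $\mathbb{F}_3^4$, but the two degree inequalities already finish the job on their own.
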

\begin{proof} 
For $7$ cards, we again begin with taking a $3$-card set, say $\{p, q, r\}$. Consider the remaining four cards not in this set; call them $s, t, u$, and $v$. Notice that if any three of them form a set, say $\{s, t, u\}$, then we could have $\{p, s, v\}$, $\{q, t, v\}$, and $\{r, u, v\}$ form sets. However, it is impossible to add any additional sets. Therefore, the maximum number of sets we found in this case is $5$. 

Suppose no three cards $s, t, u,$ and $v$ form a set. Notice from the lemma above that there cannot be any instances of a triangle (for example, an edge between $st$, $tu$, and $su$ in the graph of all sets). However, amongst $4$ points, the maximum possible number of edges without a triangle is $4$ points. The reason for this is there are only $\binom{4}{2}=6$ possible edges, so if we remove one edge, say $st$, then taking the other two points plus one of these endpoints, in this case $tuv$, would still form a triangle. Thus, we cannot have $5$ new sets; in this case, the maximum number of new sets is also $4$. 

An example of $7$ cards (points in $\mathbb{F}_3^4$) forming $5$ sets is $\{(0, 0, 0, 0), (0, 0, 0, 1), (0, 0, 0, 2), (0, 0, 1, 0),$ \newline
$(0, 0, 1, 1), (0, 0, 2, 0), (0, 0, 2, 1)\}.$  This is shown in the graph below, which we again show in $\mathbb{F}_3^2$:
 \begin{figure}[h]
\label{fig:1}
\centering 
\includegraphics[scale=0.42]{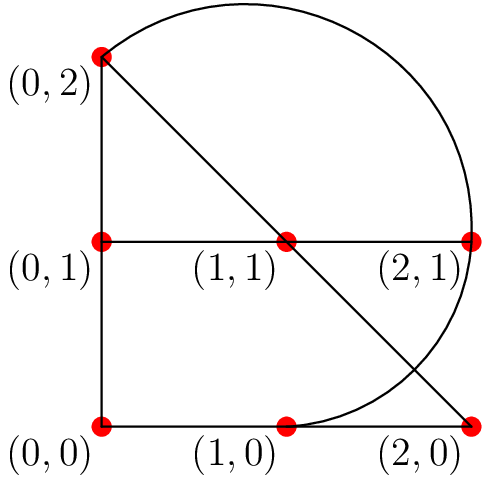}
  \caption{A configuration of $7$ cards that lead to five sets. Each set is shown with a line or curve.}
\end{figure}

\end{proof} 

\section{A Geometric Proof with Magic Squares} 
As defined both in \cite{Fairbanks} and on the PlayMonster website \cite{Falco}, a \textit{Magic Square} is a collection of nine cards, where each card is part of four valid sets. We recommend Section 6 of \cite{Fairbanks} for a complete treatment of magic squares.

It is important to note that for any three non-collinear points (i.e. any three cards that do not form a set), we can construct a magic square and that each magic square is a \textit{2-flat}, where a $k$-flat is defined by \cite{Davis} to be a $k$-dimensional affine subspace of a vector space, namely $\mathbb{F}^4_3$.\footnote{A set is a 1-flat, a magic square is a 2-flat, and a full hyperplane, is a 3-flat.} 

\begin{thm} The maximum number of sets for $9$ cards is $12$. \end{thm}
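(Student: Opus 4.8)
The plan is to prove matching upper and lower bounds of $12$. For the \emph{upper bound} I would run a double-counting argument on pairs of cards. Any two distinct cards lie on a unique line of $\mathbb{F}_3^4$, so two distinct sets can never share two cards — this is precisely the uniqueness principle already invoked for $n=4$ (if $\{p,q,r\}$ is a set then $\{p,q,s\}$ forces $r=s$). Viewing each set as a triangle on the $9$ cards, the sets are therefore pairwise edge-disjoint. Since there are only $\binom{9}{2}=36$ pairs and each set consumes $3$ of them, the number of sets $N$ satisfies $3N\le 36$, that is, $N\le 12$. This inequality is immediate and requires no casework.

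For the \emph{lower bound} I would exhibit a configuration meeting the bound using a magic square. By the discussion preceding the statement, any three non-collinear cards generate a magic square, and every magic square is a $2$-flat, i.e.\ an affine plane (a copy of $\mathbb{F}_3^2$ inside $\mathbb{F}_3^4$) consisting of exactly $9$ points. The crucial property of a flat is that it is closed under taking lines: the line through any two of its points lies entirely within it, and each such line contains exactly $3$ points. Hence every one of the $\binom{9}{2}=36$ pairs of cards in the magic square completes to a set inside the magic square, and these pairs partition into $36/3=12$ sets. So a magic square realizes exactly $12$ sets on $9$ cards.

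Combining the two bounds shows the maximum is exactly $12$. The structural reason the pairs bound is tight here — unlike for $3\le n\le 7$, where it strictly overcounts — is that a $2$-flat \emph{saturates} the pair count: every pair of its points already sits inside a complete set, so no pair is wasted. I would therefore spend most of the write-up making this saturation precise, namely verifying that an affine plane over $\mathbb{F}_3$ contains exactly $12$ lines; the counting inequality itself is the easy part and is not the obstacle. Concretely, the nine cards $(0,0,a,b)$ with $a,b\in\{0,1,2\}$ form such a $2$-flat, and directly listing their $12$ sets as the $3$ rows, $3$ columns, and $6$ diagonals of the resulting $3\times 3$ grid both certifies the lower bound and exhibits the full line structure of $\mathbb{F}_3^2$.
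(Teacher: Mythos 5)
Your proposal is correct, and its lower-bound half coincides with the paper's argument: the nine cards of a magic square (a $2$-flat, concretely the points $(0,0,a,b)$) are closed under taking lines, so the $\binom{9}{2}=36$ pairs partition into $36/3=12$ sets. Where you genuinely diverge is the upper bound. The paper carries out the same computation $\binom{9}{2}/3=12$, but only as a count of the lines of $\mathbb{F}_3^2$ itself; for the claim that \emph{no} nine cards can exceed $12$ sets, it appeals to the graph reproduced from The Joy of SET (``we can also see from the beautiful graph \dots that this is maximal''). You instead derive the upper bound from first principles: since two cards determine a unique third card completing a set, distinct sets share at most one card, hence are pairwise edge-disjoint as triangles on the $9$ vertices, giving $3N\le\binom{9}{2}=36$, i.e.\ $N\le 12$, for an \emph{arbitrary} board of $9$ cards in $\mathbb{F}_3^4$. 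This makes the theorem self-contained and rigorous where the paper's maximality claim rests on a citation, and the same edge-disjointness observation is exactly the uniqueness principle the paper already used for $n=4$, so nothing new is assumed. Your closing remark about why the pair bound is saturated precisely at $n=9$ (every pair of a $2$-flat lies in a set, so no pair is wasted) is also a structural point the paper leaves implicit; it explains cleanly why the same counting bound is strict for $3\le n\le 7$ but tight here.
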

\begin{proof} 
\label{F23}
If we hold two of the traits of the set cards constant, we get a two-dimensional space over the finite field of three elements or an affine plane of order $3$. We know that there are $9$ points in this space since the definition of an affine plane of order $q$ has $q^2$ points \cite{Fairbanks}, i.e. $|\mathbb{F}^2_3|$.   It follows that there are $\binom{9}{2}$ possible lines, as two points determine a unique line of $3$ points. 

However, this value double counts the number of lines by a factor of three, as a set of points $\{p,q,r\}$, the pairs $\{p,q\}$,  $\{q,r\}$, and $\{p,r\}$ all determine the same line.

So we actually have $\binom{9}{2} / 3 = 12$ lines in $\mathbb{F}^2_3$.   

We can also see from the beautiful graph from \cite{Joy} that this is maximal for 9 points.
\begin{center}
\includegraphics[scale=0.5]{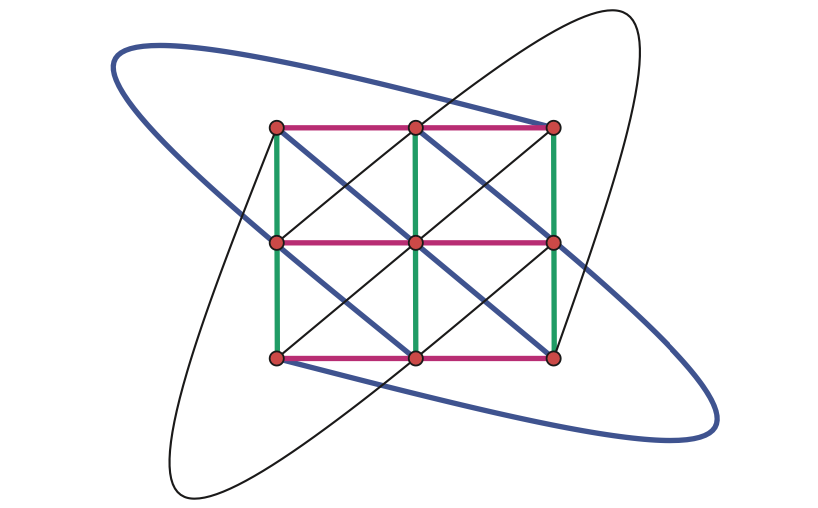}
\end{center}
Throughout the rest of the proof, we call this graph the magic square or $G$. Notice that it does not necessarily need to be all of the points in $\mathbb{F}_3^2$, as you could, for example, have the set of points:\footnote{or any other collection of 9 points which form a magic square}

 $\{(0,0,0,0), (0, 1, 1, 0), (0, 2, 2, 0), (0, 0, 0, 1), (0, 1, 1, 1), (0, 2, 2, 1), (0, 0, 0, 2), (0, 1, 1, 2), (0, 2, 2, 2)\}.$

\end{proof}

\begin{cor}The maximum number of sets for a board of $8$ cards is $8$. \end{cor}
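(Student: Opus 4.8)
The plan is to prove the bound in two matching halves: exhibit a board of $8$ cards with $8$ sets, and show no board of $8$ cards can do better. The construction is immediate from the previous theorem, so I expect the upper bound to be the real content.

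For the lower bound I would simply delete a card from the magic square. By the theorem above, the magic square $G$ is a board of $9$ cards realizing $12$ sets, and by the definition of a magic square every one of its cards lies in exactly $4$ of those sets. Removing a single card $c$ therefore destroys precisely the $4$ sets passing through $c$ and leaves every other set untouched, yielding a board of $8$ cards with $12-4=8$ sets. Hence $8$ is attainable, and it only remains to show it cannot be exceeded.

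For the upper bound I would argue by a degree count that is self-contained and does not even require the full $9$-card result. Let the board have the $8$ cards, let $S$ be its number of sets, and for each card $c$ let $d_c$ denote the number of sets through $c$. Fix $c$: each set through $c$ pairs it with two of the other seven cards, and because the line through $c$ and any fixed second point of $\mathbb{F}_3^4$ is unique, no card can serve as $c$'s partner in two distinct sets. Thus the sets through $c$ use pairwise-disjoint pairs drawn from the remaining $7$ cards, so $d_c \le \lfloor 7/2 \rfloor = 3$. Since every set contains exactly three cards, summing over the board counts each set three times, giving $3S = \sum_c d_c \le 8\cdot 3 = 24$, and therefore $S \le 8$.

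The step I expect to be the crux is squeezing the bound from $9$ down to $8$. The crude pairwise count $3S \le \binom{8}{2} = 28$ only gives $S \le 9$, since two distinct sets cannot share a pair; the genuine improvement comes from the cap $d_c \le 3$, i.e.\ from the parity loss in $\lfloor 7/2\rfloor = 3$ rather than $7/2$. I would want to double-check that this degree argument is consistent with the smaller cases already computed (for instance it correctly reproduces $S\le 12$ when $n=9$), and verify on the explicit magic-square example that deleting one card really removes exactly four sets, confirming that the upper bound of $8$ is met with equality.
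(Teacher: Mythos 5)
Your proposal is correct and follows essentially the same route as the paper: the construction deletes one card from the magic square, and the upper bound comes from the degree cap $d_c \le \lfloor 7/2 \rfloor = 3$ combined with the handshake count $3S = \sum_c d_c \le 8 \cdot 3$, which is exactly the paper's (more tersely stated) computation $\frac{8\cdot 3}{3} = 8$. Your write-up is just a fuller articulation of the same argument, making explicit the disjoint-pairs justification for $d_c \le 3$ and the verification that deleting a card removes exactly its four sets.
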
 
\begin{proof} 
We claim removing a single point from the graph of $G$ above results in a maximal configuration. In this case, every card is involved in exactly $3$ sets (which is maximal since $\lfloor \frac{7}{2} \rfloor=3$). This forms a total of $\frac{8\cdot 3}{3}=8$ sets. 
The graph below shows the maximum number of sets for a board of $8$ cards.
\begin{figure}[h]
\label{fig:1}
\centering 
\includegraphics[scale=0.42]{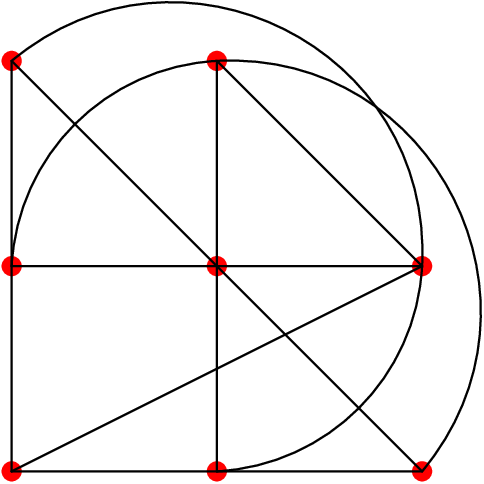}
  \caption{The maximum number of sets for $n=8$ is $8$. Notice that each line or curve forms a set.}
\end{figure}
\end{proof}
Observe that for every value of $n>9$, we must have $G$ as a subgraph since otherwise, we could always add more sets to our configuration until we arrived at the magic square again due to the magic square being optimal. This observation immediately leads to the following corollary, too.

\begin{cor} The maximum number of sets for a board of $10$ cards is $12$. \end{cor}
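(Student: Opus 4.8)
The plan is to combine the achievability of $12$ sets with a matching upper bound, both leveraging the observation (stated just above the corollary) that any optimal board with $n > 9$ must contain a magic square $G$ as a subgraph. For the lower bound, I would start from the magic square: $9$ cards forming a $2$-flat $P$ that realizes exactly $12$ sets, and adjoin an arbitrary tenth card $x$. Since $P$ is a $2$-flat it contains precisely $|\mathbb{F}_3^2| = 9$ points, so any tenth card automatically lies outside $P$. The claim is that $x$ participates in no new set, so the board still realizes exactly $12$ sets, giving $12$ as a lower bound for $n = 10$.

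To justify this claim---and simultaneously the upper bound---I would prove the key closure property of a $2$-flat: if $a, b \in P$, then the third point $r = -(a+b)$ of the line (set) through them also lies in $P$. This follows by writing $P = v_0 + V$ for a linear subspace $V$; if $a = v_0 + u_1$ and $b = v_0 + u_2$ with $u_1, u_2 \in V$, then $-(a+b) = v_0 - u_1 - u_2 \in P$, using $-2 = 1$ in $\mathbb{F}_3$. Consequently, any set that uses two cards of the magic square closes up entirely inside $P$ and is therefore already counted among the original $12$; it cannot pass through a point lying outside $P$.

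With this in hand the upper bound is immediate. By the preceding observation, a maximal $10$-card board consists of a magic square $G$ together with a single extra card $x$ outside the flat $P$. A hypothetical new set through $x$ has the form $\{a, b, x\}$ with $a + b + x = 0$. If both $a, b \in P$, then the closure property forces $x \in P$, a contradiction; and since $x$ is the \emph{only} card outside $P$, we cannot instead have a set using $x$ together with a second card outside $P$. Hence $x$ lies on no set at all, and the board has exactly the $12$ sets coming from $G$, so $12$ is also an upper bound.

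The main obstacle is making the ``must contain $G$'' observation fully rigorous and confirming that the extra card genuinely contributes nothing; everything else reduces to the affine-closure computation above. Once the closure property is established, both the example achieving $12$ and the impossibility of exceeding $12$ fall out of the same short argument, so the real work lies in justifying that the optimum embeds a magic square rather than in the set-counting itself.
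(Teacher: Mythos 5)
Your proposal is correct and follows essentially the same route as the paper: invoke the observation that an optimal board with $n>9$ contains a magic square $G$, then argue the single extra card cannot participate in any set, so the count stays at $12$. The only difference is that where the paper appeals loosely to the ``maximality'' of $G$ to conclude the extra card contributes nothing, you prove the underlying fact explicitly via affine closure of the $2$-flat (writing $P = v_0 + V$ and checking $-(a+b) \in P$), which is a welcome sharpening of the same argument rather than a different one.
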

\begin{proof} Due to the magic square above being maximal, note that one extra card cannot contribute any extra sets to it. Note that this is the same reasoning as going from $n=3$ to $n=4$, where we could not add any extra sets because $n=3$ has the maximum number of sets. Thus, the maximum number of sets for $n=10$ is the same as for $n=9$, which is $12$.  
\end{proof}
This also leads to the following third (less obvious) corollary of the magic square:
\begin{cor} The maximum number of sets for $n=11$ is $13$. \end{cor}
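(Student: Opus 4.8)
The plan is to leverage the observation, already established for all $n>9$, that any optimal board of $11$ cards must contain the magic square $G$ as a subconfiguration. Fix such a magic square $M$; by the previous theorem it is a $2$-flat on $9$ cards carrying exactly $12$ sets. The remaining $2$ cards, call them $s$ and $t$, are then the only cards outside $M$, and since $M$ as a $2$-flat already contains all $9$ points of its affine plane, neither $s$ nor $t$ lies in $M$. Every set on the board uses $0$, $1$, or $2$ of these external cards, so the entire problem reduces to counting how many new sets $s$ and $t$ can contribute on top of the $12$ internal ones.

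The key step is to rule out sets that use exactly one external card together with two cards of $M$. If $\{a,b,s\}$ were a set with $a,b\in M$, then $s=-(a+b)$ is the third point of the line through $a$ and $b$; because $M$ is a $2$-flat (an affine subspace, hence closed under taking the third point of a line, since the coefficients $-1,-1$ in $s=-a-b$ satisfy $-1-1\equiv 1 \pmod 3$ and so make $s$ an affine combination of $a,b$), this forces $s\in M$, a contradiction. Hence no new set uses a single external card. The only remaining new sets are those using both $s$ and $t$, i.e.\ of the form $\{s,t,p\}$; such a set has the unique third point $p=-(s+t)$, so there is at most one of them, occurring precisely when $-(s+t)\in M$. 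This yields the upper bound $12+1=13$.

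For the matching lower bound I would exhibit an explicit configuration: take the magic square $M$, choose any $p\in M$ and any $s\notin M$, and set $t=-(s+p)$. A quick check shows $t\notin M$ (otherwise $s=-(t+p)\in M$), that $t\neq s$, and that $\{s,t,p\}$ is a set, so this board of $11$ cards realizes $12+1=13$ sets. The main obstacle — and the reason this corollary is ``less obvious'' than the $n=10$ case — is precisely the bookkeeping that distinguishes two external cards from the single external card of the $n=10$ board: a lone external card can add nothing, by the $2$-flat closure argument, but a \emph{pair} of external cards can combine through one magic-square card to create exactly one extra set, and one must verify both that this is attainable and that it cannot be exceeded.
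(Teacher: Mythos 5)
Your proof is correct and follows essentially the same route as the paper: force the magic square $G$ for $n>9$, rule out sets using one external card and two magic-square cards, observe that the two external cards can contribute at most one set (through a unique third point), and exhibit a configuration achieving $13$. The only cosmetic differences are that you justify the exclusion step via $2$-flat closure where the paper appeals to maximality of the magic square, and you give a generic construction for the lower bound where the paper lists an explicit set of coordinates.
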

\begin{proof}
Based on our observation above, we must have a magic square with $9$ of the points forming $12$ sets. Therefore, we consider the two additional points not in the magic square, $\{p, q\}$. Notice that $p$ cannot be in a set with two elements from the magic square since the magic square is maximal. Thus, the only possible new set to be formed is if $p$ and $q$ are in a set together with one point from the magic square. This leads to a maximum of $13$ sets. A configuration in which $13$ sets are achieved is:

$\{(0,0,0,0), (0, 1, 0, 0), (0, 2, 0, 0), (1, 0, 0, 0), (2, 0, 0,0), (1, 1, 0, 0), (2, 2, 0, 0), (1, 2, 0, 0), (2, 1, 0, 0),$ 

$(0, 0, 0, 1), (0, 0, 0, 2) \}.$ 
\end{proof}

We now focus on the main proof: the maximum number of sets for $n=12$ cards. 
\begin{thm}\label{thm3} The maximum number of sets for a board of $12$ cards is $14$. \end{thm}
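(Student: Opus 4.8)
The plan is to leverage the observation, established just above, that any optimal board of more than nine cards must contain a magic square $G$ among its points. For $n=12$ this leaves exactly three extra cards $p,q,r$ lying outside $G$, so the whole problem reduces to bounding how many \emph{new} sets these three points can create beyond the $12$ internal to $G$. The first step is to record that $G$, being a $2$-flat, is closed under forming the third point of a line: for any two cards $m_1,m_2\in G$ the unique card $-(m_1+m_2)$ completing their line already lies in $G$. Consequently no card outside $G$ can belong to a set with two cards of $G$, and every set other than the $12$ inside $G$ is therefore either the triple $\{p,q,r\}$ itself or a ``mixed'' set $\{x,y,m\}$ with $\{x,y\}\subset\{p,q,r\}$ and $m\in G$.

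Next I would pass to a quotient to count the mixed sets cleanly. After translating so that $G$ becomes a linear subspace $V\cong\mathbb{F}_3^2$ (set-membership is translation invariant, so this is harmless), write $\bar p,\bar q,\bar r$ for the images of $p,q,r$ in $\mathbb{F}_3^4/V\cong\mathbb{F}_3^2$; since the three points lie outside $G$, these images are nonzero. A pair, say $\{p,q\}$, completes to a set with a card of $G$ precisely when $p+q\in V$, that is, when $\bar p+\bar q=0$. The key lemma is that the three conditions $\bar p+\bar q=0$, $\bar q+\bar r=0$, $\bar p+\bar r=0$ cannot all hold: the first two give $\bar p=-\bar q=\bar r$, and then the third forces $2\bar p=0$, hence $\bar p=0$, contradicting $\bar p\neq 0$. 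Thus at most two of the three pairs can complete into $G$.

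It then remains to assemble the count by splitting on whether $\{p,q,r\}$ is itself a set. If it is, then $p+q+r=0$ forces $-(p+q)=r\notin G$, so no pair completes into $G$ and the board has only $12+1=13$ sets. If $\{p,q,r\}$ is not a set, the lemma caps the mixed sets at two, giving at most $12+2=14$; and when two pairs do complete, say with $\bar p=\bar r=-\bar q$, one checks directly that $\overline{p+q+r}=-\bar q\neq 0$, so this case is genuinely disjoint from the previous one and the triple contributes nothing extra. Either way the total is at most $14$. Finally I would exhibit a board meeting the bound: the magic square $\{(0,0,x,y):x,y\in\mathbb{F}_3\}$ together with $(1,0,0,0),(2,0,0,0),(2,0,1,0)$, whose two completing pairs $\{(1,0,0,0),(2,0,0,0)\}$ and $\{(1,0,0,0),(2,0,1,0)\}$ realize exactly $14$ sets.

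The part I expect to carry the real weight is the quotient lemma: it is precisely the characteristic-$3$ identity $2\bar p=-\bar p$ that rules out the third completion and separates the true answer $14$ from the naive guess $15$. Everything else is bookkeeping once closure of the $2$-flat $G$ has eliminated all sets using two of its points, and once the reduction to a magic square plus three free cards is in hand.
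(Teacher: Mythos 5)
Your proposal is correct, and its skeleton matches the paper's: reduce to a magic square $G$ plus three outside cards, note that the $2$-flat $G$ is closed under completing lines (so every new set uses at least two outside cards), split on whether the three outside cards themselves form a set, and cap the ``mixed'' completions at two. Where you genuinely diverge is in how that cap is proved. The paper stays inside $\mathbb{F}_3^4$: it writes the three hypothetical mixed sets as equations $p+s+t=0$, $q+t+u=0$, $r+s+u=0$, sums them, and then invokes the closure of $G$ twice to manufacture auxiliary points $z$ (with $q+r+z=0$) and $y$ (with $p+z+y=0$), concluding $2u=2y$ and hence $u=y\in G$, a contradiction. You instead translate $G$ to a linear subspace $V$ and pass to the quotient $\mathbb{F}_3^4/V\cong\mathbb{F}_3^2$, where the same three conditions become $\bar p+\bar q=\bar q+\bar r=\bar p+\bar r=0$ for nonzero $\bar p,\bar q,\bar r$, killed in two lines by $2\bar p=-\bar p$. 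The two arguments prove the identical fact and both pivot on characteristic $3$, but your quotient packaging eliminates the paper's WLOG setup and its auxiliary points $z,y$, and it makes the case analysis cleaner: the observation that $\{p,q,r\}$ being a set forces $r=-(p+q)\notin G$ (so the two cases cannot stack) is more transparent in your framing than in the paper's. The trade-off is that the paper's version is more elementary --- it never mentions cosets or quotient spaces --- which fits its expository style; yours is shorter and makes the structural reason for the bound (three nonzero vectors in $\mathbb{F}_3^2$ cannot pairwise sum to zero) visible at a glance. Your construction also differs in the choice of the three extra cards, but both verify to exactly $14$ sets.
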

We divide this proof into two sections: first, we show how to construct a $14$ set board with $12$ cards and then prove that $15$ sets are impossible. 

\subsection{Construction of a 14 Set Board for $n=12$}
We claim that the $12$ points below in $\mathbb{F}_3^4$ form $14$ lines, thus leading to $14$ sets. 
\begin{center}
\begin{tabular}{ c c l} 
 (0,0,0,0)& (0,0,0,1)&  (0,0,0,2)\\
 (0,0,1,0)& (0,0,1,1)& (0,0,1,2)\\ 
 (0,0,2,0)& (0,0,2,1)&  (0,0,2,2)\\ 
 (0,1,0,0)& (0,2,2,0)&  (0,1,2,0)\\ \end{tabular}
\end{center}
The first nine cards/three rows contain a magic square since the first two elements equal zero. Thus, they form all of $\mathbb{F}_3^2$. This gives $12$ lines/sets. Furthermore, there additionally are two lines/sets using two of the points not in the magic square: 
$$\{(0,1,0,0), (0,0,1,0), (0,2,2,0)\} \text{ and }
 \{(0,0,2,0), (0,1,2,0), (0,2,2,0)\}$$ 
Therefore, this leads to $14$ lines/sets altogether.

\begin{center}
\includegraphics[scale=0.3]{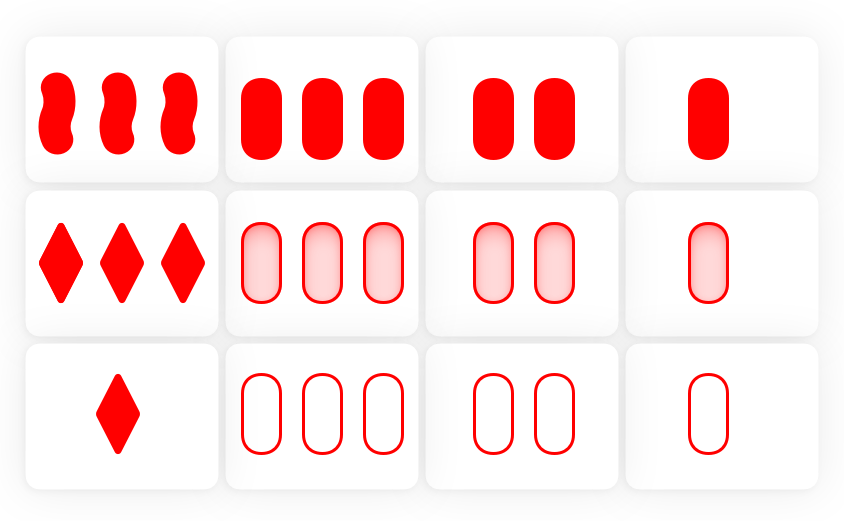}
\end{center}

Note this figure is isomorphic to the table above. Can you find all $14$ sets? 

\subsection{Proof that $15$ sets is impossible for $n=12$}
This section proves that having $15$ sets for $n=12$ is impossible.
\begin{proof} 
Assume we have a board of 12 cards that make up 15 sets. Translated back into geometric terms, we have 15 lines connecting 12 points in $\mathbb{F}^4_3$. From the previous section, we know that we must have a magic square with $9$ of these points. Call the collection of these $9$ points $G$. 

For any two points $p,q$ in $G$, we know that the third point that makes $p$ and $q$ into a line is already in $G$, so we cannot make a line that consists of $p$ and $q$. Consider the three new points $s,t,u\not\in G$. Then, if we have 12 points connecting 15 lines, we must have three new lines formed by these points. Note that if $\{s, t, u\}$ was a line itself, then it is impossible to have $\{s, p, q\}$ be a line as explained before, and it is also impossible for $\{s, t, p\}$ to be a line, so we'd have at most $13$ lines. Thus, we can only form three new lines if we combine two points not in $G$ plus one point in $G$. 

Assume without loss of generality that we can make three lines $\{p, s, t\}$, $\{q, t, u\}$ and $\{r, s, u\}$. If they did form lines, then looking at these as equations in $\mathbb{F}_3^4$, we get:
\begin{align*}
    p+s+t&=0 \\
    q+t+u&=0  \\
    r+s+u&=0. 
\end{align*}
Summing up these three equations since they're symmetric gives us:
\begin{align*}
    p+q+r+2(s+t+u) = 0
\end{align*}
From the first equation we have $s+t = -p$, therefore substituting this into the above equation gives:
\begin{align*}
    q+r-p+2u = 0 \implies 2u = p-q-r. 
\end{align*}
Notice that since $p,q,r\in G$, by the fully connected nature of the magic square $G$, we see that there must be some $z\in G$ such that $q, r, z$ form a line. Therefore, we have that $q+r+z=0$, implying that $-q-r=z$. Furthermore, since $p,z\in G$, we see that there must exist some $y$ such that $p,z,y$ form a line, therefore $p+z+y=0$. Hence, substituting this into our equation above gives us:
\begin{align*} 2u &=p-q-r \\ &=p+z \\ &=-y \\ &=2y \tag{Since we're in $\mathbb{F}_3^4$} 
\end{align*}
However, then $u=y\in G$ contradicts the assumption that $u\not\in G$. So, at best, we can add two new lines, not three, and $15$ lines are impossible.  \end{proof} 
Therefore, we have proven the paper's main result using finite field geometry from $\mathbb{F}_3^4$ and the magic square: the maximum number of sets for $12$ cards with $4$ properties is $14$. 
\newpage
We have proved the following values for the maximum number of sets with $4$ properties:
\begin{table}[h]
\centering 
    \begin{tabular}{c|cccccccccc}
    $n$ & $3$ & $4$ & $5$ & $6$ & $7$ & $8$ & $9$ & $10$ & $11$ & $12$ \\ \hline 
    Max $\#$ Sets & $1$ & $1$ & $2$ & $3$ & $5$ & $8$ & $12$ & $12$ & $13$ & $14$
    \end{tabular}
    \caption{The maximum number of sets for $3\le n\le 12$ by geometric proof.}
\end{table}

\section{Computer-Assisted Proofs}
\subsection{Large Language Model Disagreement}
Due to having a large meal before this, one of the author's usual ways of answering a tricky question is to ask Anthropic's LLM Claude how many sets are possible with 12 cards, which immediately responded with 14, which is correct. Asking OpenAI's ChatGPT \cite{OpenAI} answered 31. Such a disagreement between LLMs is a good indicator that we'll need more advanced techniques to solve this problem rigorously, in general, using computers. 

\subsection{Computer Proof of the maximum number of sets for $3\le n\le 7$ cards}
We begin by using a computer search to prove our results above for the maximum number of sets for between $3$ to $7$ cards in this section. We prove these using a complete computer search and mathematically in this section. The code for the full computer search is available in the GitHub~\footnote{GitHub repository available at: \url{https://github.com/Duncanswilson/set-search/}} repository as the file \verb|set_searcher_complete.py|.
The table for the values for the maximum number of sets is shown below, along with the total number of configurations that we need to check, which equals $C(81,n)=\binom{81}{n}$, and the amount of computational time needed to compute these values on an Asus laptop with an AMD Ryzen 7 8845 Processor. 

\begin{table}[h]
\centering 
    \begin{tabular}{c|c|l|l}
    $n$ & Max sets with $n$ cards & $C(81,n)$ configurations to check & Computational Time \\ \hline 
    $3$ & $1$ & $85320$ & $<1$ second \\
    $4$ & $1$ & $1,663,740$ & ~ $6$ seconds \\
    $5$ & $2$ & $25,621,596$ & ~ $4$ minutes \\
    $6$ & $3$ & $324,540,216$ & ~ $1$ hour, $33$ minutes \\
    $7$ & $5$ & $3,477,216,600$ & $29$ hours, $12$ minutes
    \end{tabular}
    \caption{The maximum number of sets for $3\le n\le 7$ by computer proof.}
    \label{tab:comp}
\end{table}

As computational time increases, it will become impractical to continue this way. In particular, for each configuration, we need to check $\binom{n}{3}$ sets. For $n=12$, this would mean we have to check $\binom{81}{12}\binom{12}{3}\approx 1.5\cdot 10^{16}$ possible sets, which would take an enormous amount of time to finish. 

\subsection{Computer Proof for $3$ properties}
If we limit the number of properties to $3$ instead of $4$, it is feasible to return a computer search to find the maximum number of sets. Notice that the search space complexity for $n$ cards with $3$ properties is $\binom{27}{n}\cdot \binom{n}{3}$ since there are $\binom{27}{n}$ configurations to check, and for each of which we need to check $\binom{n}{3}$ possible sets. The results can be replicated by running the file \verb|set_searcher_3d_table.py|.
\begin{table}[H]
    \centering 
\begin{tabular}{c|c|c|l}
$n$ & Max Sets& Search space complexity  & Compute Time (s) \\ \hline
$3$ & $1$ & $2.92\cdot 10^3$ & $0.00$ \\
$4$ & $1$ & $7.02\cdot 10^4$ & $0.06$ \\
$5$ & $2$ & $8.07\cdot 10^5$ & $0.64$ \\
$6$ & $3$ & $5.92\cdot 10^6$ & $4.64$ \\
$7$ & $5$ & $3.11\cdot 10^7$ & $23.76$ \\
$8$ & $8$ & $1.24\cdot 10^8$ & $94.78$ \\
$9$ & $12$ & $3.94\cdot 10^8$ & $301.44$ \\
$10$ & $12$ & $1.01\cdot 10^9$ & $761.40$ \\
$11$ & $13$ & $2.15\cdot 10^9$ & $1610.53$ \\
$12$ & $14$ & $3.82\cdot 10^9$ & $2872.26$ \\
$13$ & $16$ & $5.74\cdot 10^9$ & $4413.72$ \\
$14$ & $19$ & $7.30\cdot 10^9$ & $5322.08$ \\
$15$ & $23$ & $7.91\cdot 10^9$ & $5805.79$ \\
$16$ & $26$ & $7.30\cdot 10^9$ & $5243.79$ \\
$17$ & $30$ & $5.74\cdot 10^9$ & $4158.71$ \\
$18$ & $36$ & $3.82\cdot 10^9$ & $2850.46$ \\
$19$ & $41$ & $2.15\cdot 10^9$ & $1558.12$ \\
$20$ & $47$ & $1.01\cdot 10^9$ & $733.16$ \\
$21$ & $54$ & $3.94\cdot 10^8$ & $284.40$ \\
$22$ & $62$ & $1.24\cdot 10^8$ & $90.17$ \\
$23$ & $71$ & $3.11\cdot 10^7$ & $22.38$ \\
$24$ & $81$ & $5.92\cdot 10^6$ & $4.26$ \\
$25$ & $92$ & $8.07\cdot 10^5$ & $0.58$ \\
$26$ & $104$ & $7.02\cdot 10^4$ & $0.05$ \\
$27$ & $117$ & $2.92\cdot 10^3$ & $0.00$ \\
\end{tabular}
\caption{Maximum Number of Sets for $n$ cards with $3$ properties}
\label{tab:3prop}
\end{table}

Note that all the values in this table are lower bounds for the maximum number of sets for $4$ properties since we can set one of the properties to be constant to get sets with $3$ properties. In the next section, we show another method that provides close to optimal lower bounds for the maximum number of sets and can compute it for $n>27$.

\subsection{Consecutive Maximization Algorithm}
The SET website introduces an algorithm to find a lower bound for the maximum number of sets for $n$ cards in general~\cite{vinci2009maximum} (called internal sets). The algorithm works by starting with an arbitrary pair of cards from different magic squares (referred to as cubes), selecting a third card to form a SET with them, and then proceeding turn-by-turn, each time choosing the next card that maximizes the number of new SETs that can be formed with previously unused pairs of selected cards. 

Notice this algorithm is not always 100\% precise, as it estimates there are $35$ sets for $n=18$, while Table~\ref{tab:3prop} shows that there can be $36$ sets. However, this algorithm helps to find near-optimal set configurations quickly. The pseudocode is given in Algorithms~\ref{alg:cmm} in the Appendix. Our results mirror those found in Column 3 (Cumulative Maximum Internal Sets for N Cards) of Table 1 in \cite{vinci2009maximum} and can be replicated by running the code found in \verb|cmm.py| in the GitHub repository found above.

\section{Related Work} 
Notice that this problem is complementary to Knuth's SETSET-ALL code \cite{Knuth}, where he finds the maximum number of cards that have no set. Furthermore, a paper from MIT Primes conjectures the maximum number of quads (a variant of the card game set)~\cite{byrapuram2023maximum}, but doesn't have a full proof. Our work is built off many prior works in the card game SET, as mentioned previously, including the book The Joy of Set~\cite{Joy} and the paper from the SET website~\cite{vinci2009maximum}.

\section{Open Questions \& Conclusion} 

It remains an open question to find, with proof, the maximum number of sets with $4$ properties for $n>12$. We conjecture that the values in Table~\ref{tab:3prop} for $3$ properties are the same maximum as for $4$ properties. However, this only computes the table up to $n=27$. Therefore, further investigation is required for $n>27$. We presented Algorithm~\ref{alg:cmm} as a method for finding another lower bound for $n>27$. However, this method occasionally underestimates the actual value, as seen for $n=18$. Finding the maximum number of sets with $4$ properties for each value of $n>12$ remains an open question. 

This paper presented a novel proof for finding the maximum number of sets for $n=12$ cards using the finite field geometry of $\mathbb{F}_4^3$. We also presented several algorithms for estimating or calculating the number of sets in general for $3$ and $4$ properties. We hope our work will inspire future researchers to pursue advanced mathematical or computational approaches to answer this open question.

\printbibliography

\newpage 

\appendix 

\section{Consecutive Maximization Algorithm Pseudocode}
The following two algorithms describe the pseudocode for the consecutive maximization algorithm as introduced in~\cite{vinci2009maximum}.
\begin{algorithm}
\caption{Consecutive Maximization Method}
\label{alg:cmm}
\begin{algorithmic}[1]

\State $P = \text{ number of properties}$ \Comment{selected cards maximizing internal sets}

\State $deck \gets \text{GenerateDeck}(P)$
\State $selected \gets \emptyset$
\State $cubes \gets \text{GenerateCubes}(deck)$ \Comment{Organize into $3^{P-2}$ cubes}

\State Select initial cards from different cubes
\State $card1 \gets \text{SelectFirstCard}(cubes[1])$
\State $card2 \gets \text{SelectFirstCard}(cubes[2])$
\State $selected \gets selected \cup \{card1, card2\}$

\State Complete first Set
\State $card3 \gets \text{FindThirdCard}(card1, card2)$
\State $selected \gets selected \cup \{card3\}$

\For{$turn \gets 4$ to $3^P$}
    \If{$\exists t \in [1,P-1]: turn = 3t + 1$}
        \State $card \gets \text{SelectFromUnusedCube}(cubes, selected)$
    \Else
        \State $maxSets \gets 0$
        \State $bestCard \gets \text{null}$
        \ForAll{$c \in deck \setminus selected$}
            \State $newSets \gets \text{CountNewSets}(selected \cup \{c\})$
            \If{$newSets > maxSets$}
                \State $maxSets \gets newSets$
                \State $bestCard \gets c$
            \ElsIf{$newSets = maxSets \land \text{CubeOf}(c) \neq \text{CubeOf}(selected[\text{last}])$}
                \State $bestCard \gets c$ \Comment{Prefer different cube}
            \EndIf
        \EndFor
        \State $card \gets bestCard$
    \EndIf
    \State $selected \gets selected \cup \{card\}$
\EndFor
\State \Return $selected$

\end{algorithmic}
\end{algorithm}

\begin{algorithm}
\caption{Helper Functions}
\begin{algorithmic}[1]

\Function{CountNewSets}{$cards$}
    \State $count \gets 0$
    \ForAll{$(c1,c2) \in \text{Pairs}(cards)$}
        \State $c3 \gets \text{FindThirdCard}(c1, c2)$
        \If{$c3 \in cards$}
            \State $count \gets count + 1$
        \EndIf
    \EndFor
    \State \Return $count$
\EndFunction

\Function{FindThirdCard}{$card1, card2$}
    \ForAll{property $p$}
        \If{$card1[p] = card2[p]$}
            \State $card3[p] \gets card1[p]$
        \Else
            \State $card3[p] \gets 6 - card1[p] - card2[p]$
        \EndIf
    \EndFor
    \State \Return $card3$
\EndFunction

\Function{GenerateDeck}{$P$}
    \State \Return $\{cards \text{ with } P \text{ properties, each } \in \{1,2,3\}\}$
\EndFunction

\Function{GenerateCubes}{$deck$}
    \State \Return partition of $deck$ into $3^{P-2}$ 3×3 cubes
\EndFunction

\Function{SelectFirstCard}{$cube$}
    \State \Return first available card from $cube$
\EndFunction

\Function{SelectFromUnusedCube}{$cubes, selected$}
    \ForAll{$cube \in cubes$}
        \If{$cube \cap selected = \emptyset$}
            \State \Return first available card from $cube$
        \EndIf
    \EndFor
\EndFunction

\end{algorithmic}
\end{algorithm}

\end{document}